\documentclass[11pt]{article}

\usepackage[T1]{fontenc}
\usepackage[utf8]{inputenc}
\usepackage{amsmath,amssymb,amsthm,mathtools}
\usepackage{microtype}
\usepackage[hidelinks]{hyperref}

\newtheorem{theorem}{Theorem}[section]
\newtheorem{lemma}[theorem]{Lemma}
\newtheorem{proposition}[theorem]{Proposition}
\newcommand{\T}{\mathbb T}
\newcommand{\R}{\mathbb R}
\newcommand{\Z}{\mathbb Z}
\newcommand{\Q}{\mathbb Q}
\newcommand{\usual}{\tau_{\mathrm{usual}}}
\newcommand{\torusnorm}[1]{\lVert #1\rVert_{\T}}
\DeclareMathOperator{\dist}{dist}

\title{Topological Vector Group Topologies Between the Minimal Topology and the Usual Topology on the Real Line}
\author{Irina Yaroshevskaya}
\date{}

\begin{document}
\maketitle

\begin{abstract}
For every positive sequence that tends to zero faster than every fixed exponential, we construct a Hausdorff topological Vector Group topology on the additive group of real numbers. It lies strictly between the minimal Hausdorff topological Vector Group topology and the usual topology. The construction is illustrated by factorial powers, quadratic exponential decay, and prime radicals divided by a quadratic exponential. We also record a finite scalar covering criterion for comparing two such topologies.
\end{abstract}

\noindent\textbf{2020 Mathematics Subject Classification.}
Primary 54H11; Secondary 22A05, 54A10.

\noindent\textbf{Key words and phrases.}
topological Vector Group, minimal group topology, intermediate topology, circle group, simultaneous approximation.

\section{Introduction}

A \emph{topological Vector Group} (TVG) topology in this article is a group topology on a real vector space for which multiplication by each fixed real scalar is continuous. Joint continuity of scalar multiplication is not required. The topology $t$ was introduced by I.~B.~Yaroshevskaya in 1975~\cite{Y1975}; it was proved minimal among Hausdorff TVG topologies on $\R$ in 1981~\cite{Y1981}. We denote the usual topology on $\R$ by $\usual$.

Let $\T=\R/\Z$ be the circle group and write
\[
  \torusnorm{x}=\dist(x,\Z)=\inf_{m\in\Z}|x-m|.
\]
For a finite nonempty set $F\subset\R\setminus\{0\}$ and $\varepsilon>0$, put
\[
  T(F,\varepsilon)=
  \{x\in\R:\torusnorm{\lambda x}<\varepsilon
  \text{ for every }\lambda\in F\}.
\]
The sets $T(F,\varepsilon)$ form a neighborhood base at zero for $t$.

Let $(r_n)_{n\geq1}$ be a positive sequence such that
\begin{equation}\label{eq:decay}
  \frac{-\log r_n}{n}\longrightarrow\infty.
\end{equation}
Equivalently, for every $c>0$ one has $r_n\leq e^{-cn}$ for all sufficiently large $n$. For a finite nonempty set $A\subset\R\setminus\{0\}$ define
\[
  p_A(x)=\sup_{\substack{a\in A\\ n\geq1}}
  \torusnorm{a r_nx},
  \qquad
  U(A,\varepsilon)=\{x\in\R:p_A(x)<\varepsilon\}.
\]
Let $\sigma_{(r_n)}$ be the translation-invariant topology generated by these sets.

\begin{theorem}[General construction]\label{thm:main}
The topology $\sigma_{(r_n)}$ is a Hausdorff TVG topology and
\[
  t\subsetneq\sigma_{(r_n)}\subsetneq\usual.
\]
\end{theorem}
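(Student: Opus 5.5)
We verify the TVG axioms directly from properties of $p_A$, get the two inclusions from elementary estimates, and prove the two strict inclusions by exhibiting points of $t$- and $\sigma_{(r_n)}$-neighborhoods that escape the relevant smaller set. Only the last step uses \eqref{eq:decay}.

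\textbf{TVG axioms.} Each $p_A$ is a group seminorm: $p_A(-x)=p_A(x)$ and $p_A(x+y)\le p_A(x)+p_A(y)$, since $\torusnorm{\cdot}$ has these properties and a supremum preserves them. It follows that:
\begin{itemize}
\item $U(A,\varepsilon/2)+U(A,\varepsilon/2)\subset U(A,\varepsilon)$ and $-U(A,\varepsilon)=U(A,\varepsilon)$;
\item $U(A\cup B,\min(\varepsilon,\delta))\subset U(A,\varepsilon)\cap U(B,\delta)$.
\end{itemize}
So the sets $U(A,\varepsilon)$ form a neighborhood base at $0$ of a translation-invariant group topology. For a fixed scalar $\lambda\neq0$ we have $p_A(\lambda x)=p_{\lambda A}(x)$, hence $\lambda\,U(\lambda A,\varepsilon)\subset U(A,\varepsilon)$, and multiplication by $\lambda$ is continuous (trivially so for $\lambda=0$).

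\textbf{Hausdorff.} Fix $x\neq0$. Since $r_n\to0$, we have $0<|r_nx|<1/2$ for some $n$. Then $p_{\{1\}}(x)\ge|r_nx|>0$, so $x\notin U(\{1\},|r_nx|)$.

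\textbf{Inclusions.} For $t\subset\sigma_{(r_n)}$, use only the index $n=1$: $\torusnorm{\lambda x}=\torusnorm{(\lambda/r_1)r_1x}$, hence $U(F/r_1,\varepsilon)\subset T(F,\varepsilon)$. For $\sigma_{(r_n)}\subset\usual$, put $R=\sup_n r_n<\infty$ and $M=\max_{a\in A}|a|$. Then $p_A(x)\le MR|x|$, so $U(A,\varepsilon)$ contains the interval $|x|<\varepsilon/(MR)$.

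\textbf{$t\neq\sigma_{(r_n)}$.} We show $U(\{1\},1/4)$ is not a $t$-neighborhood of $0$. Fix $F=\{\lambda_1,\dots,\lambda_m\}$ and $\varepsilon>0$. By compactness of $\T^m$, the Bohr set $T(F,\varepsilon)$ is relatively dense: there is $L$ such that every interval of length $L$ meets it. To see this, cover the closure of $\{(\lambda_jx)_j:x\in\R\}$ in $\T^m$ by finitely many translates of an $\varepsilon/2$-neighborhood and take differences. Choose $N$ with $1/(2r_N)>L$. Then some $x\in T(F,\varepsilon)$ lies in $[1/(4r_N),3/(4r_N)]$, so $r_Nx\in[1/4,3/4]$ and $p_{\{1\}}(x)\ge\torusnorm{r_Nx}\ge1/4$. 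Hence $x\in T(F,\varepsilon)\setminus U(\{1\},1/4)$, and no $T(F,\varepsilon)$ is contained in $U(\{1\},1/4)$.

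\textbf{$\sigma_{(r_n)}\neq\usual$ (main obstacle).} We show $(-1,1)$ is not a $\sigma_{(r_n)}$-neighborhood of $0$. Given $A$ and $\varepsilon<1$, we seek an integer $q\ge1$ with $p_A(q)<\varepsilon$. Set $k=|A|$ and $c=k\log(1/\varepsilon)+1$. By \eqref{eq:decay}, choose $N$ so that $r_n\le e^{-c'n}$ for all $n\ge N$, where $c'$ is larger than $c$ by a margin fixed below.

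\emph{Indices $n<N$.} There are $K=k(N-1)$ frequencies $ar_n$. Dirichlet's pigeonhole principle gives an integer $1\le q\le\lceil1/\varepsilon\rceil^{K}$ with $\torusnorm{ar_nq}<\varepsilon$ for all of them.

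\emph{Indices $n\ge N$.} Here we bound the real number directly:
\[
|ar_nq|\le M e^{-c'n}(2/\varepsilon)^{kN}\le M e^{-c'n}(2/\varepsilon)^{kn}.
\]
This is $<\varepsilon$ for all $n\ge N$ once $c'>k\log(2/\varepsilon)+1$ and $N$ is large.

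Thus $q\in U(A,\varepsilon)$ while $|q|\ge1$. The delicate point is exactly this balance: the Dirichlet bound on $q$ grows exponentially in $N$, and the faster-than-exponential decay \eqref{eq:decay} is what beats it uniformly in the tail $n\ge N$.
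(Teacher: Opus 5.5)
Your proof is correct and follows essentially the same route as the paper: Dirichlet's pigeonhole principle on the finitely many frequencies $ar_n$ with $n<N$, combined with the superexponential tail estimate, produces points of $U(A,\varepsilon)$ outside a bounded interval, while compactness of the closure of the image of $\R$ in $\T^F$ (your relative density of $T(F,\varepsilon)$, equivalently the paper's finite-translate property) shows that $U(\{1\},1/4)$ is not a $t$-neighborhood. The remaining differences are cosmetic: you exhibit a single integer point $q\geq1$ instead of proving full unboundedness, and you separate points using a small $r_n$ rather than the scalar $(2r_1x)^{-1}$.
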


\section{Proof of the general construction}

\subsection{The topology and the TVG property}

The circle distance satisfies
$\torusnorm{u+v}\leq\torusnorm{u}+\torusnorm{v}$ and
$\torusnorm{-u}=\torusnorm{u}$. Taking suprema gives
\[
  p_A(x+y)\leq p_A(x)+p_A(y),
  \qquad p_A(-x)=p_A(x).
\]
Consequently,
\[
 U(A,\varepsilon/2)+U(A,\varepsilon/2)
 \subset U(A,\varepsilon),
\]
each $U(A,\varepsilon)$ is symmetric, and
\[
 U(A\cup B,\min\{\varepsilon,\delta\})
 \subset U(A,\varepsilon)\cap U(B,\delta).
\]
If $y\in U(A,\varepsilon)$, set
$\eta=\varepsilon-p_A(y)>0$. For $h\in U(A,\eta)$,
\[
 p_A(y+h)\leq p_A(y)+p_A(h)<\varepsilon.
\]
Hence $y+U(A,\eta)\subset U(A,\varepsilon)$. Together with the preceding relations, this proves that the translates of the sets $U(A,\varepsilon)$ form a group-topology base. If $c\neq0$, then
\[
  p_A(cx)=p_{cA}(x),
  \qquad cA=\{ca:a\in A\}.
\]
Thus multiplication by $c$ and its inverse, multiplication by $1/c$, are continuous. Multiplication by zero is constant and continuous. Hence $\sigma_{(r_n)}$ is a TVG topology.

If $x\neq0$, choose $a=(2r_1x)^{-1}$. Then
$\torusnorm{ar_1x}=1/2$, so $x\notin U(\{a\},1/4)$. Therefore the intersection of all zero-neighborhoods is $\{0\}$, and the topology is Hausdorff.

\subsection{Comparison with the usual topology}

Condition~\eqref{eq:decay} implies $r_n\to0$. Hence
\[
 r^*=\sup_{n\geq1}r_n<\infty.
\]
For finite $A$, let $M_A=\max_{a\in A}|a|$. Then
\[
 p_A(x)\leq M_A r^*|x|.
\]
Let $y\in U(A,\varepsilon)$ and put
$\eta=\varepsilon-p_A(y)>0$. If
$|h|<\eta/(M_Ar^*)$, then $p_A(h)<\eta$, and therefore
$p_A(y+h)<\varepsilon$. Thus every point of $U(A,\varepsilon)$ has an ordinary open interval contained in $U(A,\varepsilon)$. Every $\sigma_{(r_n)}$-open set is therefore usual-open, and
$\sigma_{(r_n)}\subseteq\usual$.

\subsection{Unboundedness of basic neighborhoods}

\begin{lemma}[Finite approximation]\label{lem:dirichlet}
For real numbers $\beta_1,\ldots,\beta_d$ and an integer $H\geq2$, there is an integer $q$ such that
\[
 1\leq q\leq H^d,
 \qquad
 \torusnorm{q\beta_\ell}<\frac1H
 \quad(1\leq\ell\leq d).
\]
\end{lemma}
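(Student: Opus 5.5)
This is Dirichlet's simultaneous approximation theorem, and I will prove it by the pigeonhole principle applied to fractional parts. For each integer $k$ with $0\le k\le H^d$, consider the point
\[
 v_k=(\{k\beta_1\},\ldots,\{k\beta_d\})\in[0,1)^d,
\]
where $\{y\}=y-\lfloor y\rfloor$ denotes the fractional part. This gives $H^d+1$ points. Partition $[0,1)^d$ into the $H^d$ half-open boxes
\[
 \prod_{\ell=1}^d\Bigl[\tfrac{j_\ell}{H},\tfrac{j_\ell+1}{H}\Bigr),\qquad 0\le j_\ell\le H-1 .
\]
Since there are more points than boxes, two indices $0\le k<k'\le H^d$ have $v_k$ and $v_{k'}$ in the same box.

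Set $q=k'-k$, so that $1\le q\le H^d$. For each $\ell$,
\[
 q\beta_\ell=(k'\beta_\ell-\lfloor k'\beta_\ell\rfloor)-(k\beta_\ell-\lfloor k\beta_\ell\rfloor)+m_\ell
\]
for some integer $m_\ell$. Hence
\[
 \torusnorm{q\beta_\ell}\le\bigl|\{k'\beta_\ell\}-\{k\beta_\ell\}\bigr|.
\]
Two numbers lying in a common half-open interval of length $1/H$ differ by strictly less than $1/H$. This gives the strict inequality $\torusnorm{q\beta_\ell}<1/H$ for every $\ell$, as required.

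The only point needing care is the strictness of the final inequality together with the count of points versus boxes. Both are handled by using half-open boxes and exactly $H^d+1$ multiples $k=0,\ldots,H^d$. The argument does not use $H\ge2$ beyond the boxes being well defined, and it places no restriction on the $\beta_\ell$: rational values, zero, or repeated values are all covered.
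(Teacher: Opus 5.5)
Your proof is correct and takes the same approach as the paper: pigeonhole the $H^d+1$ fractional-part vectors into $H^d$ half-open boxes of side $1/H$, then take $q=k'-k$. You also write out the step from the difference of fractional parts to the circle distance, which the paper leaves implicit.
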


\begin{proof}
Place the $H^d+1$ points
\[
 (\{m\beta_1\},\ldots,\{m\beta_d\}),
 \qquad 0\leq m\leq H^d,
\]
into the $H^d$ half-open boxes of side $1/H$ partitioning $[0,1)^d$. Two points, with indices $m_0<m_1$, lie in the same box. Therefore, for every coordinate $\ell$,
\[
 \big|\{m_1\beta_\ell\}-\{m_0\beta_\ell\}\big|<\frac1H.
\]
With $q=m_1-m_0$, this gives $1\leq q\leq H^d$ and the asserted circle-distance inequalities.
\end{proof}

\begin{lemma}\label{lem:unbounded}
Every basic neighborhood $U(A,\varepsilon)$ is unbounded.
\end{lemma}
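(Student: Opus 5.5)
The plan is to show that for every $L>0$ the set $U(A,\varepsilon)$ contains a point $x\geq L$. The point will have the form $x=qL$, where $q$ is a positive integer supplied by Lemma~\ref{lem:dirichlet}. The condition $p_A(x)<\varepsilon$ involves infinitely many $n$, so I will split the indices into a finite head $1\leq n\leq N$ and a tail $n>N$. The head is handled by simultaneous approximation, and the tail by the decay condition~\eqref{eq:decay}. The difficulty is that Lemma~\ref{lem:dirichlet} only bounds $q$ by $H^{d}$, with $d$ growing linearly in $N$. So the tail estimate must absorb a factor exponential in $N$, and this is exactly what~\eqref{eq:decay} is designed to do.

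\textbf{Constants.} Fix an integer $H\geq2$ with $1/H\leq\varepsilon$, let $k=|A|$ and $M_A=\max_{a\in A}|a|$. Choose $c>k\log H+1$. By~\eqref{eq:decay} there is $N_0$ with $r_n\leq e^{-cn}$ for all $n\geq N_0$. Given $L>0$, choose $N\geq N_0$ so large that $M_A L e^{-N}<\varepsilon$.

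\textbf{Head.} Apply Lemma~\ref{lem:dirichlet} with $d=kN$ to the reals $\beta_{a,n}=a r_n L$, for $a\in A$ and $1\leq n\leq N$. This gives an integer $q$ with $1\leq q\leq H^{kN}$ and
\[
\torusnorm{a r_n qL}<\frac1H\leq\varepsilon \qquad (a\in A,\ n\leq N).
\]

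\textbf{Tail.} For $n>N\geq N_0$ I would bound the circle distance by the absolute value:
\[
\torusnorm{a r_n qL}\leq M_A L\,H^{kN}e^{-cn}\leq M_A L\,e^{(k\log H-c)n}\leq M_A L e^{-n}\leq M_A L e^{-N}.
\]
The second inequality uses $N<n$, and the third uses $c-k\log H>1$. By the choice of $N$, this last quantity is below $\varepsilon$.

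Hence, strictly, $\torusnorm{ar_nx}<\varepsilon$ for every $a\in A$ and every $n$. The supremum $p_A(x)$ is therefore at most $\max\{1/H,\ M_ALe^{-N}\}$. To keep the inequality $p_A(x)<\varepsilon$ strict even when $1/H=\varepsilon$, I would simply take $1/H<\varepsilon$ from the start. Then $x=qL\geq L$ lies in $U(A,\varepsilon)$, and since $L$ was arbitrary, $U(A,\varepsilon)$ is unbounded.

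The only delicate step is the order of choices in the tail estimate. $N$ must be chosen after $L$, and $c$ must dominate the Dirichlet growth rate $k\log H$, which is fixed before $N$. With these constants fixed in the order above, the argument is not circular.
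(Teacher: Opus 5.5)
Your proof is correct and is essentially the paper's argument: a head $n\le N$ handled by Lemma~\ref{lem:dirichlet} with $q\le H^{kN}$, and a tail $n>N$ where the decay condition~\eqref{eq:decay} absorbs the factor $H^{kN}$ through the trivial bound $\torusnorm{u}\le|u|$. The differences are cosmetic. You apply the lemma directly to $ar_nL$, take $x=qL$, and use an explicit rate $c>k\log H+1$, whereas the paper normalizes by $b=Mr^*$, takes $x=Lq/b$ with an integer $L$ satisfying $L/H<\varepsilon$, and bounds the tail with $s_N$. Your extra caution about $1/H=\varepsilon$ is unnecessary, since the head is a finite maximum of values strictly below $1/H$.
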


\begin{proof}
Write $A=\{a_1,\ldots,a_k\}$ and prescribe $T_0>0$. Put
\[
 r^*=\sup_{n\geq1}r_n,
 \qquad M=\max_{1\leq i\leq k}|a_i|,
 \qquad b=Mr^*,
 \qquad s_N=\sup_{j>N}r_j.
\]
Choose positive integers $L,H$ such that $L>bT_0$ and $L/H<\varepsilon$. For fixed $H$ and $k$, condition~\eqref{eq:decay} gives
\begin{equation}\label{eq:tail}
 H^{kN}s_N\longrightarrow0.
\end{equation}
Indeed, choose $c>k\log H$. For all sufficiently large $j$, $r_j\leq e^{-cj}$, and hence $s_N\leq e^{-c(N+1)}$ for all sufficiently large $N$.

Choose $N$ so large that
\[
 \frac{L}{r^*}H^{kN}s_N<\varepsilon.
\]
Apply Lemma~\ref{lem:dirichlet} to the $kN$ numbers
\[
 \frac{a_ir_j}{b},
 \qquad 1\leq i\leq k,
 \quad 1\leq j\leq N,
\]
and define $x=Lq/b$. Since $q\geq1$ and $L>bT_0$, we have $x>T_0$. If $j\leq N$, integer multiplication in $\T$ gives
\[
 \torusnorm{a_ir_jx}
 \leq L\torusnorm{q\frac{a_ir_j}{b}}
 <\frac{L}{H}<\varepsilon.
\]
If $j>N$, then
\[
 \torusnorm{a_ir_jx}
 \leq |a_ir_jx|
 \leq \frac{L}{r^*}H^{kN}s_N
 <\varepsilon.
\]
Thus $x\in U(A,\varepsilon)$ and $x>T_0$. Since $T_0$ was arbitrary, the neighborhood is unbounded.
\end{proof}

No bounded ordinary interval can contain a basic $\sigma_{(r_n)}$-neighborhood. Therefore
$\sigma_{(r_n)}\subsetneq\usual$.

\subsection{Comparison with the minimal topology}

Fix $r_1>0$. For a finite $F\subset\R\setminus\{0\}$, put
$A=\{\lambda/r_1:\lambda\in F\}$. The constraint with $n=1$ gives
\[
 U(A,\varepsilon)\subset T(F,\varepsilon).
\]
Hence $t\subseteq\sigma_{(r_n)}$.

\begin{lemma}[Finite-translate property of $t$]\label{lem:syndetic}
For every finite nonempty $F\subset\R\setminus\{0\}$ and every $\varepsilon>0$, there is a finite set $C\subset\R$ such that
\[
 \R=C+T(F,\varepsilon).
\]
\end{lemma}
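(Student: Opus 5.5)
We prove that $T(F,\varepsilon)$ is \emph{syndetic} by working through the compact torus. Write $F=\{\lambda_1,\ldots,\lambda_k\}$ and consider the continuous homomorphism
\[
 \phi:\R\to\T^k,\qquad \phi(x)=(\lambda_1x,\ldots,\lambda_kx)\bmod\Z^k .
\]
Equip $\T^k$ with the translation-invariant metric $\rho(u,v)=\max_\ell\torusnorm{u_\ell-v_\ell}$. By definition, $T(F,\varepsilon)=\phi^{-1}(B_\rho(0,\varepsilon))$. Let $K=\overline{\phi(\R)}$. Since $\phi(\R)$ is a subgroup, $K$ is a closed, and hence compact, subgroup of $\T^k$.

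The first step is a covering argument. The open balls $B_\rho(z,\varepsilon/2)$ with $z\in\phi(\R)$ cover $K$, because $\phi(\R)$ is dense in $K$. By compactness there are finitely many points $c_1,\ldots,c_m\in\R$ such that the balls $B_\rho(\phi(c_i),\varepsilon/2)$ cover $K$. Set $C=\{c_1,\ldots,c_m\}$.

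The second step verifies the covering of $\R$. Given $x\in\R$, the point $\phi(x)$ lies in $K$, so $\rho(\phi(x),\phi(c_i))<\varepsilon/2$ for some $i$. Since $\phi$ is a homomorphism and $\rho$ is invariant, $\rho(\phi(x-c_i),0)<\varepsilon/2<\varepsilon$. That is, $\torusnorm{\lambda_\ell(x-c_i)}<\varepsilon$ for every $\ell$. Hence $x-c_i\in T(F,\varepsilon)$, and so $x\in C+T(F,\varepsilon)$.

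None of these steps is a real obstacle. The only point that needs care is using density of $\phi(\R)$ in $K$ to pick the centers inside $\phi(\R)$, so that they come from real numbers $c_i$; this is exactly why we use radius $\varepsilon/2$. As an alternative route avoiding topology on $K$, we could take $C$ to be a maximal $\varepsilon$-separated set, in the $\rho\circ\phi$ pseudometric, inside the orbit. Finiteness would then follow from a volume (pigeonhole) bound in $\T^k$ in the spirit of Lemma~\ref{lem:dirichlet}: at most $\lceil 2/\varepsilon\rceil^k$ points pairwise at $\rho$-distance at least $\varepsilon$, via boxes of side $\varepsilon/2$. Maximality then gives $\R=C+T(F,\varepsilon)$ directly, using symmetry of $T(F,\varepsilon)$.
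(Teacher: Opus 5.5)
Your main argument is correct and is essentially the paper's proof. Both cover the compact closure of $\phi_F(\R)$ in the torus by finitely many small boxes and use density to obtain real centers. The only cosmetic difference is that the paper first takes centers $h_j\in G$ and then approximates them, which is where its $\varepsilon/2$ is genuinely needed; since you choose centers directly in $\phi(\R)$, radius $\varepsilon$ would already suffice. Your alternative packing argument via a maximal $\varepsilon$-separated set is also sound, and it additionally gives the explicit bound $|C|\le\lceil 2/\varepsilon\rceil^{|F|}$ without any appeal to compactness.
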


\begin{proof}
Define
\[
 \phi_F:\R\longrightarrow\T^F,
 \qquad
 \phi_F(x)=(\lambda x+\Z)_{\lambda\in F},
\]
and let $G$ be the closure of $\phi_F(\R)$. The group $G$ is compact. For $\delta>0$, let
\[
 B_\delta=
 \{z\in G:\torusnorm{z_\lambda}<\delta
 \text{ for every }\lambda\in F\}.
\]
Compactness gives $h_1,\ldots,h_s\in G$ such that
$G\subset\bigcup_{j=1}^s(h_j+B_{\varepsilon/2})$.
Because $\phi_F(\R)$ is dense in $G$, choose $c_j\in\R$ with
$h_j\in\phi_F(c_j)+B_{\varepsilon/2}$. The triangle inequality yields
\[
 h_j+B_{\varepsilon/2}
 \subset\phi_F(c_j)+B_\varepsilon.
\]
Pulling the resulting finite cover back under $\phi_F$ gives
$\R=\{c_1,\ldots,c_s\}+T(F,\varepsilon)$.
\end{proof}

Consider $W=U(\{1\},1/4)$. We show that no finite family of translates of $W$ covers $\R$. Let $C\subset\R$ be finite and nonempty. Choose $n$ so large that
\[
 r_n\max_{c\in C}|c|<\frac18,
\]
and set $x=(2r_n)^{-1}$. For every $c\in C$,
\[
 \torusnorm{r_n(x-c)}
 =\torusnorm{\frac12-r_nc}>\frac14.
\]
Therefore $x-c\notin W$ for every $c\in C$, so $x\notin C+W$. The assertion is also immediate when $C$ is empty.

If $W$ were a $t$-neighborhood, it would contain some $T(F,\varepsilon)$; Lemma~\ref{lem:syndetic} would then imply that finitely many translates of $W$ cover $\R$, a contradiction. Hence
$t\subsetneq\sigma_{(r_n)}$. Together with the previous subsections, this completes the proof of Theorem~\ref{thm:main}.

\section{Three examples}

\subsection{Factorial powers}

Let $0<q<1$ and set $r_n=q^{n!}$. Then
\[
 \frac{-\log r_n}{n}
 =(n-1)!\log\frac1q\longrightarrow\infty.
\]
Theorem~\ref{thm:main} gives a topology $\sigma_q$ with
$t\subsetneq\sigma_q\subsetneq\usual$. The parameter $q$ may be transcendental, algebraic, or rational. For example, $q=1/2$ works although the resulting sequence is rational and is not linearly independent over $\Q$.

\subsection{Quadratic exponential decay}

Set $r_n=e^{-n^2}$. Since
\[
 \frac{-\log r_n}{n}=n\longrightarrow\infty,
\]
the resulting topology again lies strictly between $t$ and $\usual$. More generally, the same proof applies to
$r_n=e^{-cn^\alpha}$ whenever $c>0$ and $\alpha>1$.

\subsection{Prime radicals divided by a quadratic exponential}

Let $p_n$ be the $n$th prime and define
\[
 r_n=\frac{\sqrt{p_n}}{2^{n^2}}.
\]
Bertrand's postulate implies $p_n\leq2^n$, and therefore
\[
 \frac{-\log r_n}{n}
 =n\log2-\frac{\log p_n}{2n}
 \geq\left(n-\frac12\right)\log2
 \longrightarrow\infty.
\]
Thus Theorem~\ref{thm:main} applies. The numbers $\sqrt{p_n}$ are linearly independent over $\Q$: for any finite subfamily, the sign-changing automorphisms of the corresponding multiquadratic extension isolate each coefficient in a rational linear relation. This algebraic fact is not needed for the topological construction.

\section{Dependence on the sequence}

The topology depends only on the compact scalar set
\[
 K_r=\{0\}\cup\{r_n:n\geq1\},
\]
not on the order or repetition of its nonzero elements. Write $\sigma_K$ for the topology obtained from a compact scalar set $K$ by using the seminorms
\[
 p_{A,K}(x)=\sup_{\substack{a\in A\\u\in K}}
 \torusnorm{a\cdot u\cdot x}.
\]

\begin{proposition}[Finite scalar covering criterion]\label{prop:cover}
If
\[
 K_1\subset c_1K_2\cup\cdots\cup c_sK_2
\]
for finitely many nonzero real scalars $c_1,\ldots,c_s$, then
$\sigma_{K_1}\subseteq\sigma_{K_2}$.
\end{proposition}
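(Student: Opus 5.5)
The plan is to show that every basic zero-neighborhood of $\sigma_{K_1}$ contains a basic zero-neighborhood of $\sigma_{K_2}$. Translation invariance then gives the inclusion of topologies. The key step is a pointwise domination of seminorms:
\[
 p_{A,K_1}(x)\leq p_{A',K_2}(x),\qquad A'=\{a c_j: a\in A,\ 1\leq j\leq s\},
\]
where $A'$ is a finite subset of $\R\setminus\{0\}$ because each $c_j\neq0$ and $0\notin A$. From this inequality we get $U_{K_2}(A',\varepsilon)\subset U_{K_1}(A,\varepsilon)$.

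To prove the domination, fix $a\in A$ and $u\in K_1$. By hypothesis there are an index $j$ and an element $v\in K_2$ with $u=c_jv$. Then
\[
 \torusnorm{a u x}=\torusnorm{(ac_j)vx}\leq p_{A',K_2}(x).
\]
Taking the supremum over $a$ and $u$ gives the claim. If $u=0$, the term on the left is $0$, so that case needs no separate treatment.

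Next I would check that $\sigma_K$ is a group topology for a general compact $K$, with the sets $U_K(A,\varepsilon)$ as base. This repeats the subadditivity and translation arguments of the first subsection verbatim, since those used only the triangle inequality on $\T$. Finiteness of each $p_{A,K}$ is automatic, because $\torusnorm{\cdot}\leq 1/2$. Given this, a $\sigma_{K_1}$-open set $O$ and a point $y\in O$ satisfy $y+U_{K_1}(A,\varepsilon)\subset O$ for some $A$ and $\varepsilon$. The domination then gives $y+U_{K_2}(A',\varepsilon)\subset O$, so $O$ is $\sigma_{K_2}$-open.

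I expect no real obstacle here. The only point needing care is the bookkeeping that $\sigma_K$ is generated by translates of the $U_K(A,\varepsilon)$, so that comparing zero-neighborhood bases suffices. This follows exactly as in the proof of Theorem~\ref{thm:main}.
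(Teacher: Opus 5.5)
Your proposal is correct and follows the paper's own argument: the scaled set $A'=\bigcup_j c_jA$, the observation that every constraint in $p_{A,K_1}$ appears among those of $p_{A',K_2}$, and the resulting inclusion $U_{K_2}(A',\varepsilon)\subset U_{K_1}(A,\varepsilon)$. You also verify explicitly that the translates of the sets $U_K(A,\varepsilon)$ form a base for $\sigma_K$ for a general compact $K$, which the paper leaves implicit.
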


\begin{proof}
Given a finite scalar set $A$, put
$A'=\bigcup_{j=1}^s c_jA$. Every constraint appearing in
$p_{A,K_1}$ then also appears among the constraints in $p_{A',K_2}$.
Consequently,
\[
 U_{K_2}(A',\varepsilon)
 \subset U_{K_1}(A,\varepsilon),
\]
which is precisely the asserted inclusion of topologies.
\end{proof}

For $0<q<1$, set
\[
 K_q=\{0\}\cup\{q^{n!}:n\geq1\}.
\]

\begin{proposition}\label{prop:factorial-rigidity}
If
\[
 K_{q_1}\subset c_1K_{q_2}\cup\cdots\cup c_sK_{q_2}
\]
for finitely many nonzero real scalars, then $q_1=q_2$.
\end{proposition}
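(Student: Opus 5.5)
The plan is to reduce the covering hypothesis to an arithmetic relation between factorials along an infinite subsequence, and then use how sparse the ratios $m!/n!$ are. Write $a=\log q_1<0$ and $b=\log q_2<0$.

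\textbf{Step 1 (pigeonhole).} For each $n\geq1$ the number $q_1^{n!}$ is nonzero and lies in $K_{q_1}$. By hypothesis it lies in some $c_jK_{q_2}$. Since it is nonzero, it is not $c_j\cdot0$, so there are an index $j(n)\in\{1,\ldots,s\}$ and an integer $m(n)\geq1$ with
\[
 q_1^{n!}=c_{j(n)}\,q_2^{m(n)!}.
\]
Only finitely many indices $j$ are available, so some fixed $j$ occurs for all $n$ in an infinite set $S\subset\mathbb N$. Along $S$ this gives $q_1^{n!}=c\,q_2^{m(n)!}$ with the single scalar $c=c_j$. Because the left side is positive, $c>0$.

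\textbf{Step 2 (logarithms).} Taking logarithms gives, for every $n\in S$,
\[
 n!\,a-m(n)!\,b=\log c .
\]
Dividing by $n!$ yields
\[
 a-\frac{m(n)!}{n!}\,b=\frac{\log c}{n!}\longrightarrow0
\]
as $n\to\infty$ in $S$. Since $b\neq0$, this means
\[
 \frac{m(n)!}{n!}\longrightarrow\frac ab\in(0,\infty).
\]

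\textbf{Step 3 (the main point).} For integers $m,n\geq1$ the ratio $m!/n!$ takes one of three forms:
\begin{itemize}
 \item it equals $1$ when $m=n$;
 \item it is at least $n+1$ when $m>n$;
 \item it is at most $1/n$ when $m<n$.
\end{itemize}
As $n\to\infty$ in $S$, the last two kinds of values tend to $\infty$ and to $0$ respectively. Neither can approximate the positive finite number $a/b$. Hence $m(n)=n$ for all sufficiently large $n\in S$, and so $m(n)!/n!=1$ eventually.

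Therefore $a/b=1$, that is, $\log q_1=\log q_2$, and so $q_1=q_2$.

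The only step that requires care is Step 3: the constant $\log c$ must become negligible after division by $n!$, and the gaps in the set $\{m!/n!\}$ must be uniform in $n$. Both hold because $n!\to\infty$ along the infinite set $S$.
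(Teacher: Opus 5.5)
Your proof is correct and matches the paper's argument step for step: pigeonhole onto a single scalar $c>0$, take logarithms and divide by $n!$ to get $m!/n!\to\log q_1/\log q_2\in(0,\infty)$, then use the gap structure ($m!/n!\geq n+1$ or $\leq 1/n$) to force $m=n$ eventually. Two details you make explicit, which the paper leaves implicit, are indexing the relation by $n$ so that $n\to\infty$ along $S$, and noting that nonzero elements of $K_{q_1}$ cannot come from $c_j\cdot 0$.
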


\begin{proof}
Put $\alpha=\log(1/q_1)>0$ and $\beta=\log(1/q_2)>0$. By the pigeonhole principle, for one positive scalar $c$ there are infinitely many pairs $(n,m)$ such that
\[
 q_1^{n!}=c q_2^{m!}.
\]
Thus
\[
 \alpha n!=\beta m!-\log c,
 \qquad
 \frac{m!}{n!}\longrightarrow\frac{\alpha}{\beta}
\]
along these pairs. If $m\geq n+1$, then $m!/n!\geq n+1$; if $m\leq n-1$, then $m!/n!\leq1/n$. Since $\alpha/\beta$ is finite and positive, necessarily $m=n$ for all sufficiently large pairs. Taking the limit gives $\alpha=\beta$, and hence $q_1=q_2$.
\end{proof}

Proposition~\ref{prop:factorial-rigidity} distinguishes the scalar sets under the finite-cover relation of Proposition~\ref{prop:cover}. Whether equality $\sigma_{q_1}=\sigma_{q_2}$ alone forces $q_1=q_2$ is not resolved here.

\section{Historical context and related literature}

Yaroshevskaya introduced $t$ in 1975 and proved that it makes $\R$ a Hausdorff TVG~\cite{Y1975,Y1975English}. Her paper received in 1979 and published in 1981 proved minimality among Hausdorff TVG topologies on $\R$~\cite{Y1981}, using the circle group $\T=\R/\Z$ and almost-periodic functions.

Raikov~\cite{Raikov1968} and Kenderov~\cite{Kenderov1970} developed the broader theory of topological vector groups. Dergachev~\cite{Dergachev1978} later described $t$ through usual-continuous almost-periodic functions and used it to construct a connected TVG that is not a topological vector space. Hejcman studied TVG topologies on the real line~\cite{Hejcman1988}. Dom\'inguez and Tarieladze~\cite{DominguezTarieladze2008} describe the same minimal topology through the usual-continuous characters of $\R$ and discuss its precompactness and scalar-continuity properties. The Kronecker--Weyl and character-theoretic references~\cite{Bailleul2022,DGT2012} provide alternative tools used in longer treatments of this construction.

\section*{Acknowledgement}

The author acknowledges the assistance of OpenAI's Codex and Anthropic's Claude in mathematical discussion, review of proofs, improvement of exposition, and document preparation. The author assumes full responsibility for the final content and any remaining errors.

\end{document}